\newtheorem{theo}{Theorem}[section]
\newtheorem{prop}[theo]{Proposition}
\newtheorem{lemm}[theo]{Lemma}
\newtheorem{coro}[theo]{Corollary}
\newtheorem{rema}[theo]{Remark}
\newtheorem{example}[theo]{Example}
\title{\bf Newton polytope of good symmetric polynomials}
\author{Duc-Khanh Nguyen, Nguyen Thi Ngoc Giao, Dang Tuan Hiep, Do Le Hai Thuy}
\date{}
\newfont{\gothic}{eufb10}
\begin{document}
\maketitle
\begin{abstract} 
 We introduce a general class of symmetric polynomials that have saturated Newton polytope and their Newton polytope has integer decomposition property. The class covers numerous previously studied symmetric polynomials.
\end{abstract}
\textit{\\2020 Mathematics Subject Classification.} 52B20, 05E05.\\ 
\textit{Keywords and phrases.} Newton polytope, Symmetric polynomials.

\section{Introduction}
In combinatorics, if a convex polytope equals the convex hull of its integer points, we say that it is a lattice polytope. Studying lattice polytopes is important because of their connections in many other domains. For instance, in mathematical optimization, if a system of linear inequalities defines a polytope, then we can use linear programming to solve integer programming problems for this system (see \cite{barvinok2017lattice}). In algebraic geometry, lattice polytopes are used to study projective toric varieties (see \cite{cox2011toric, fulton2016introduction}). The Newton polytope is a lattice polytope associated with a polynomial: it is the convex hull of exponent vectors. The Newton polytope is a central object in tropical geometry (see \cite{kazarnovskii2021newton}), and they are used to characterizing Grobner bases (see \cite{sturmfels1996grobner}).\\

Lattice polytopes are studied by Ehrhart polynomials (see \cite{eugene1962polyedres}). Important properties of Ehrhart polynomials such as unimodality and log-concavity are related to the integer decomposition property (IDP) of the lattice polytope (see \cite{ohsugi2006special, bruns2007h, schepers2013unimodality}). In \cite{bayer2020lattice}, the authors studied the Newton polytope of inflated symmetric Grothendieck polynomials. The saturated property (SNP) of inflated symmetric Grothendieck polynomials in \cite{bayer2020lattice} generalizes the SNP of symmetric Grothendieck polynomials in \cite{escobar2017newton}. The SNP of the inflated symmetric Grothendieck polynomials is an important point to derive the IDP of their Newton polytope.\\

In this paper, we introduce a general class of symmetric polynomials that has SNP with Newton polytope has IDP (see Theorem \ref{GoodSNPIDP} and Corollary \ref{goodsign}). Our class covers symmetric polynomials in \cite{escobar2017newton, monical2019newton, bayer2020lattice, matherne2022newton}: symmetric Grothendieck polynomials, inflated symmetric Grothendieck polynomials, Stembridge's symmetric polynomials associated with totally nonnegative matrices, cycle index polynomials, Reutenauer's symmetric polynomials, Schur $P$-polynomials and Schur $Q$-polynomials, Stanley's symmetric polynomials, chromatic symmetric polynomials of co-bipartite graphs, indifference graphs of Dyck paths, incomparability graphs of (3+1)-free posets. It also covers other symmetric polynomials, for instance, dual Grothendieck polynomials in \cite{lam2007combinatorial}.\\

\textbf{Acknowledgments:} This work is supported by the Ministry of Education and Training, Vietnam, under project code B2022-CTT-02: ``Study some combinatorial models in Representation Theory", 2022-2023 (Decision No.1323/QD-BGDDT, May 19, 2022). Khanh was partially supported by the NSF grant DMS-1855592 of Prof. Cristian Lenart. Hiep would like to thank Vietnam Institute for advanced study in Mathematics for the very kind support and hospitality during his visit. We are grateful to the referee for valuable comments to improve the text.

\section{Newton polytope}\label{Newtonpolytopes}

A \textbf{polytope} $\mathcal{P}$ in $\mathbb{R}^m$ is the convex hull $Conv(v_1,\dots,v_k)$ of finite many points $v_1,\dots,v_k \in \mathbb{R}^m$. The \textbf{vertex set} of $\mathcal{P}$ is the minimal set $V$ in $\mathbb{R}^m$ such that $\mathcal{P} = Conv(V)$. Algebraically, a point $v \in \mathcal{P}$ is a \textbf{vertex} if, $v=tw+(1-t)u$ for some $w,u \in \mathcal{P}$, $t \in (0,1)$ implies $w=u=v$. We say that $\mathcal{P}$ is a \textbf{lattice polytope} if $V$ is a subset of $\mathbb{Z}^m$.

\begin{example}\label{P} The convex hull $\mathcal{P}$ of twelve points in $\mathbb{R}^3$ below is a lattice polytope.$$ (3,1,0), (3,0,1), (1,0,3), (0,1,3), (0,3,1), (1,3,0),$$ 
$$(2,2,0), (2,0,2), (0,2,2),$$
$$(2,1,1), (1,1,2), (1,2,1).$$
The permutations of $(3,1,0)$ are vertices of the polytope $\mathcal{P}$. In the picture below, $\mathcal{P}$ is the blue hexagon. 
\begin{center}
\begin{tikzpicture}[scale=1]
	\def\a{blue}
	\def\b{red}
	\def\c{pink}
	\def\d{violet}
	\def\e{orange}
	\def\opacity{100}
	\tikzstyle{point1}=[ball color=blue, circle, draw=black, inner sep=0.03cm]
	\tikzstyle{point2}=[ball color=red, circle, draw=black, inner sep=0.03cm]
	\tikzstyle{point3}=[ball color=yellow, circle, draw=black, inner sep=0.03cm]
	
	\node (O0) at (0,0,0)[]{};
	\node (O1) at (4,0,0)[]{};
	\node (O2) at (0,4,0)[]{};
	\node (O3) at (0,0,4)[]{};
	
	\node at (O0) [left = 1mm]{\tiny$O$}; 
	\node at (O1) [above = 1mm]{\tiny$e_1$}; 
	\node at (O2) [right = 1mm]{\tiny$e_2$};
	\node at (O3) [above = 1mm]{\tiny$e_3$};
	 
	\draw[-{stealth[scale=3.0]}] (O0) -- (O1);
	\draw[-{stealth[scale=3.0]}] (O0) -- (O2);
	\draw[-{stealth[scale=3.0]}] (O0) -- (O3);
	
	\filldraw[fill=\a!20,rounded corners=0.5pt] (3,1,0) -- (3,0,1) -- (1,0,3) -- (0,1,3) -- (0,3,1) -- (1,3,0) -- cycle;

	\node (A1) at (3,1,0)[point1]{};
	\node (A2) at (3,0,1)[point1]{};
	\node (A3) at (1,0,3)[point1]{};
	\node (A4) at (0,1,3)[point1]{};
	\node (A5) at (0,3,1)[point1]{};
	\node (A6) at (1,3,0)[point1]{};
	
	\node (B1) at (2,2,0)[point2]{};
	\node (B2) at (2,0,2)[point2]{};
	\node (B3) at (0,2,2)[point2]{};
	
	\node (C1) at (2,1,1)[point3]{};
	\node (C2) at (1,2,1)[point3]{};
	\node (C3) at (1,1,2)[point3]{};
	
	\node at (A1) [right = 1mm]{\tiny$(3,1,0)$}; 
	\node at (A2) [below = 1mm]{\tiny$(3,0,1)$};
	\node at (A3) [below = 1mm]{\tiny$(1,0,3)$};
	\node at (A4) [left = 1mm]{\tiny$(0,1,3)$};
	\node at (A5) [left = 1mm]{\tiny$(0,3,1)$};
	\node at (A6) [right = 1mm]{\tiny$(1,3,0)$};
	
	\node at (B1) [right = 1mm]{\tiny$(2,2,0)$}; 
	\node at (B2) [below]{\tiny$(2,0,2)$};
	\node at (B3) [left = 1mm]{\tiny$(0,2,2)$};
	
	\node at (C1) [above = 1mm]{\tiny$(2,1,1)$}; 
	\node at (C2) [above = 1mm]{\tiny$(1,2,1)$};
	\node at (C3) [above = 1mm]{\tiny$(1,1,2)$};
	
\end{tikzpicture}
\end{center}
\end{example}

Let $\mathcal{P}$ be a lattice polytope. For a positive integer $t$, let $t\mathcal{P}=\{t v \mid v \in \mathcal{P}\}$. We say that $\mathcal{P}$ has \textbf{integer decomposition property (IDP)} if, for any positive integer $t$ and $p \in t\mathcal{P}\cap \mathbb{Z}^m$, there are $t$ points $v_1, \dots, v_t \in \mathcal{P} \cap \mathbb{Z}^m$ such that $p=v_1+\dots+v_t$. 

\begin{example}\label{tP} Let $\mathcal{P}$ be the lattice polytope in Example \ref{P}. It is known that $\mathcal{P}$ has IDP (\cite[Proposition 11]{bayer2020lattice}).  For instance, $3\mathcal{P}$ is the convex hull of six points \begin{equation*}
    (9,3,0), (9,0,3), (3,0,9), (0,3,9), (0,9,3), (3,9,0).
\end{equation*} 
We see that $(9,2,1) \in 3\mathcal{P} \cap \mathbb{Z}^3$ and is the sum of three points in $\mathcal{P} \cap \mathbb{Z}^3$.
$$(9,2,1)=(3,1,0) + (3,1,0) + (3,0,1).$$
\end{example} 

\begin{example} Let $\mathcal{G}$ be convex hull of four points 
\begin{equation*}
    (0,0,0), (1,0,0), (0,0,1), (1,2,1).
\end{equation*}
The elements in $\mathcal{G}\cap \mathbb{Z}^3$ are
\begin{equation*}
    (0,0,0), (1,0,0), (0,0,1), (1,2,1).
\end{equation*}
We have $(1,1,1) \in 2\mathcal{G}\cap \mathbb{Z}^3$, but it can not be written as a sum of two points in $\mathcal{G}\cap \mathbb{Z}^3$. So $\mathcal{G}$ does not have IDP.
\end{example}

Let $f(x) = \sum\limits_{\alpha \in \mathbb{Z}_{\geq 0}^m}c_\alpha x^\alpha \in \mathbb{C}[x_1,\dots,x_m]$. The \textbf{support} of $f$ is defined by $$ Supp(f)=\{\alpha \in \mathbb{Z}_{\geq 0}^m \mid c_\alpha \ne 0\}.$$ The \textbf{Newton polytope} of $f$ is defined by $$Newton(f) = Conv(Supp(f)).$$ We say that $f$ has \textbf{satured Newton polytope (SNP)} if $Newton(f) \cap \mathbb{Z}^m = Supp(f)$. 

\begin{example} \label{Newton(f)}
Let $f(x_1,x_2,x_3)$ be the polynomial 

\begin{equation*} 
\begin{split}
    & x^{(3,1,0)} + x^{(3,0,1)} + x^{(1,0,3)} + x^{(0,1,3)} + x^{(0,3,1)} + x^{(1,3,0)}\\
    + & x^{(2,2,0)} + x^{(2,0,2)} + x^{(0,2,2)}\\
    + & 2x^{(2,1,1)} + 2x^{(1,1,2)} + 2x^{(1,2,1)}.
\end{split}
\end{equation*}

The set $Supp(f)$ contains twelve points in Example \ref{P}. Then $Newton(f)$ is the polytope $\mathcal{P}$ in Example \ref{P}. Since $Newton(f) \cap \mathbb{Z}^3 = Supp(f)$, $f$ has SNP. 
\end{example}

\section{Schur polynomials}\label{Schurpolynomials}
A \textbf{partition} with at most $m$ parts is a sequence of weakly decreasing nonnegative integers $\lambda=(\lambda_1, \dots, \lambda_m)$. The \textbf{size} of partition $\lambda$ is defined by $|\lambda|= \sum\limits_{i=1}^m \lambda_i$. Each partition $\lambda$ is presented by a \textbf{Young diagram $Y(\lambda)$} that is a collection of boxes such that the leftmost boxes of each row are in a column, and the numbers of boxes from the top row to bottom row are $\lambda_1,\lambda_2,\dots$, respectively. A \textbf{semistandard Young tableau} of shape $\lambda$ with entries from $\{1,\dots,m\}$ is a filling of the Young diagram $Y(\lambda)$ by the ordered alphabet $\{1<\dots<m\}$ such that the entries in each column are strictly increasing from top to bottom, and the entries in each row are weakly increasing from left to right. A Young tableau $T$ is said to have \textbf{content} $\alpha=(\alpha_1,\alpha_2,\dots)$ if $\alpha_i$ is the number of entries $i$ in the tableau $T$. We write \begin{equation*}
x^T =x^\alpha = x_1^{\alpha_1}x_2^{\alpha_2}\dots.
\end{equation*}
For each partition $\lambda$ with at most $m$ parts, the \textbf{Schur polynomial} $s_\lambda(x_1,\dots,x_m)$ is defined as the sum of $x^T$, where $T$ runs over the semistandard Young tableaux of shape $\lambda$ with filling from $\{1,\dots,m\}$.

\begin{example}\label{Schur}
Vector $(3,1,0)$ is a partition. The Young diagram of $(3,1,0)$ is 
$$\begin{ytableau}
\,&\,&\,\\
\,\\
\end{ytableau}$$
The following filling is a semistandard tableau of shape $(3,1,0)$ and content $(1,2,1)$.
$$
\begin{ytableau}
1&2&3\\
2
\end{ytableau} 
$$
Schur polynomial $s_{(3,1,0)}(x_1,x_2,x_3)$ is the polynomial $f$ in Example \ref{Newton(f)}.
\end{example}

\section{Good symmetric polynomials}\label{goodsymmetricpolynomials}
Let $\alpha$ and $\beta$ be partitions with at most $m$ parts. We say $\beta$ is \textbf{bigger} than $\alpha$ and write $\beta \geq \alpha$ if and only if $\beta_i \geq \alpha_i$ for all $i$. If $\alpha, \beta$ are partitions of the same size, we say $\beta$ \textbf{dominates} $\alpha$ and write $\beta \trianglerighteq \alpha$ if $\sum\limits_{i=1}^j \beta_i \geq \sum\limits_{i=1}^j \alpha_i$ for all $j \geq 1$. 

\begin{example}
$(3,1,0) < (3,3,3)$ and  $(3,2,0) \trianglerighteq (3,1,1).$
\end{example}

Let $F(x_1,\dots,x_m)$ be a linear combination of Schur polynomials associated to partitions with at most $m$ parts. We can collect Schur polynomials appearing in $F$ associated with partitions of the same size to a bracket. We say that $F$ is \textbf{good} if it satisfies the following conditions:
\begin{itemize}
    \item[(a)] The support of each bracket equals the union of supports of its Schur elements.
    \item[(b)] Suppose that there are $l+1$ brackets in condition (a). In each bracket, there is a unique $\trianglerighteq$-maximum partition. These $\trianglerighteq$-maximum partitions have a form 
    \begin{equation}\label{chain}
        \alpha=\lambda^0 < \dots < \lambda^l = \beta,
    \end{equation}
    where $\alpha \leq \beta$ are fixed partitions and for each $i>0$, $\lambda^i$ is obtained from $\lambda^{i-1}$ by adding a box in the northmost row of $\lambda^{i-1}$ such that the addition gives a Young diagram, $\alpha < \lambda^{i} \leq \beta$.
\end{itemize}

\begin{theo}\label{GoodSNPIDP}
Let $F$ be a good linear combination of Schur polynomials. Then $F$ has SNP and Newton(F) has IDP.
\end{theo}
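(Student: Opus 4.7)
The strategy is to reduce both statements to the known SNP and IDP of the Newton polytopes $Newton(s_\mu)$ of the individual Schur summands. Recall (Rado) that $Newton(s_\mu)$ is the permutohedron cut out by $x \geq 0$, $|x| = |\mu|$, and
\[
\sum_{i \in S} x_i \leq \phi_{|S|}(\mu) \text{ for all } S \subseteq [m], \quad \text{where } \phi_s(\mu) := \mu_1 + \cdots + \mu_s,
\]
and that $Newton(s_\mu) \cap \mathbb{Z}^m = Supp(s_\mu)$. Brackets of different sizes contribute disjoint monomials, so hypothesis (a), together with the fact that the $\trianglerighteq$-maximum $\lambda^k$ of the $k$-th bracket absorbs the supports of the other summands (since $\mu \trianglelefteq \lambda^k$ implies $Supp(s_\mu) \subseteq Supp(s_{\lambda^k})$), yields
\[
Supp(F) = \bigsqcup_{k=0}^{l} Supp(s_{\lambda^k}), \qquad Newton(F) = Conv\!\Big(\bigcup_{k=0}^{l} Newton(s_{\lambda^k})\Big),
\]
so the problem becomes purely polytopal.

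The technical heart is a concavity lemma: for every $s \in \{1,\dots,m\}$, the sequence $k \mapsto \phi_s(\lambda^k)$ is concave on $\{0, 1, \dots, l\}$. Since each transition $\lambda^{k-1}\to\lambda^k$ adds exactly one box, every increment lies in $\{0,1\}$, and concavity is equivalent to the claim that, once some step puts a box in a row $>s$, every later step does the same. Call a row $j$ \emph{blocked} at step $k$ when $\lambda^k_j = \beta_j$, or when $j>1$ and $\lambda^k_j = \lambda^k_{j-1}$: by hypothesis (b) these are exactly the rows ineligible to receive a box at step $k+1$. Adding a box in a row $>s$ modifies neither $\lambda_j$ nor $\lambda_{j-1}$ for $j \leq s$, so rows $1,\dots,s$ remain blocked after such a step; therefore once all of them are blocked simultaneously, the northmost-row rule never again picks a row inside $\{1,\dots,s\}$, establishing concavity.

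Combining the concavity lemma with Jensen's inequality applied to the convex-hull description of $Newton(F)$ gives the slice description
\[
Newton(F) \cap \{|x| = c\} = (1 - \theta) Newton(s_{\lambda^k}) + \theta Newton(s_{\lambda^{k+1}}), \quad c = (1 - \theta)|\lambda^k| + \theta|\lambda^{k+1}|,\ \theta \in [0,1),
\]
both sides being cut out by the same facet inequalities $\sum_{i \in S} x_i \leq (1-\theta)\phi_{|S|}(\lambda^k) + \theta\phi_{|S|}(\lambda^{k+1})$. SNP is then immediate: any integer $p \in Newton(F)$ has $|p|\in\mathbb{Z}$, which forces $\theta = 0$ and $p \in Newton(s_{\lambda^k}) \cap \mathbb{Z}^m = Supp(s_{\lambda^k}) \subseteq Supp(F)$. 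For IDP, let $p \in t\cdot Newton(F) \cap \mathbb{Z}^m$; the slice at level $|p|/t$ produces $p = au + bv$ with $u \in Newton(s_{\lambda^k})$, $v \in Newton(s_{\lambda^{k+1}})$, and $a := t|\lambda^{k+1}|-|p|$, $b := |p| - t|\lambda^k|$ non-negative integers summing to $t$ (using $|\lambda^{k+1}|-|\lambda^k| = 1$). Hence $p$ is an integer point of $a\cdot Newton(s_{\lambda^k}) + b\cdot Newton(s_{\lambda^{k+1}})$, a Minkowski sum of polymatroid base polytopes; the standard integer-decomposition property for such sums delivers $p = P + Q$ with $P, Q$ integer points of the respective summands, and the IDP of each individual permutohedron (cf.\ \cite{bayer2020lattice}) then splits $P$ and $Q$ into the required $a + b = t$ integer points of $Supp(F)$.

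\textbf{Main obstacle.} The entire argument pivots on the concavity lemma; once it is in hand, the rest amounts to routine manipulations of (generalized) permutohedra, Jensen-type inequalities, and well-known polymatroid facts. The delicate point is verifying that the two blocking conditions, the northmost-row rule, and the upper bound $\lambda^i \leq \beta$ together really prevent any of the first $s$ rows from unblocking once they are all blocked simultaneously; this is where hypothesis (b) of the definition of a \emph{good} polynomial is used in full strength.
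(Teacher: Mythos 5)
Your proposal is correct and takes a genuinely different route from the paper's.

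\textbf{Comparison.} The paper reduces to the chain of $\trianglerighteq$-maximum partitions $\lambda^0 < \cdots < \lambda^l$ exactly as you do, but then proves SNP by importing Claims B and C from Escobar--Yong (showing $p^\downarrow \trianglelefteq (\overline{p})^\downarrow \trianglelefteq \lambda^j$ for the integer slice $j$), and proves IDP by a purely tableau-theoretic trick: it builds an auxiliary good polynomial $F_t$ on the dilated chain $t\alpha < \cdots < t\beta$, shows $Newton(F_t)=t\,Newton(F)$, realises an integer point $p$ of $t\,Newton(F)$ as the content of a semistandard tableau of some shape $\Lambda^i$, and splits that tableau into $t$ tableaux by taking columns in each residue class mod $t$. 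Your proof replaces both steps with a single structural observation: the concavity of $k \mapsto \phi_s(\lambda^k)$, which follows from the northmost-row rule exactly as you argue (once all of rows $1,\dots,s$ are blocked, adding a box in a row $>s$ leaves them blocked). Concavity gives the clean slice description $Newton(F)\cap\{|x|=c\}=(1-\theta)Newton(s_{\lambda^k})+\theta\,Newton(s_{\lambda^{k+1}})$, from which SNP is immediate ($\theta=0$ at integer levels) and IDP follows by decomposing the lattice point inside the Minkowski sum $a\,Newton(s_{\lambda^k})+b\,Newton(s_{\lambda^{k+1}})$. This is more conceptual, makes the polytope geometry transparent, and even gives a slightly stronger statement (a description of \emph{every} real slice, not just integer ones). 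The trade-off is that it relies on a nontrivial external fact that you invoke only as ``the standard integer-decomposition property for such sums'': that a lattice point of a Minkowski sum of two integral polymatroid base polytopes decomposes as a sum of lattice points of the summands. This is a genuine theorem (Edmonds' polymatroid intersection / the discrete separation theorem; see e.g.\ McDiarmid, \emph{Rado's theorem for polymatroids}, or Murota's discrete convex analysis) but it should be cited precisely rather than waved at, since it is doing the same amount of work as the paper's entire column-splitting construction. With that citation supplied, your argument is complete and correct.
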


\begin{coro}\label{goodsign}
Let $F$ be a linear combination of Schur polynomials such that the condition (a) is replaced by (a') or the condition (b) is replaced by (b') below:
\begin{itemize}
    \item[(a')] any two Schur polynomials in the same bracket of $F$ have the same sign,
    \item[(b')] there exists partitions $\bar{\lambda}, \hat{\lambda}$ so that $s_\mu$ appears in $F$ if and only if $\bar{\lambda} \leq \mu \leq \hat{\lambda}$. 
\end{itemize}
Then $F$ is a good polynomial. In particular, $F$ has SNP and $Newton(F)$ has IDP.
\end{coro}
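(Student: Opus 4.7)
The plan is to show, in each of the two replacement scenarios, that $F$ satisfies the original conditions (a) and (b) of Section \ref{goodsymmetricpolynomials}, so that Theorem \ref{GoodSNPIDP} applies directly and yields the SNP and IDP conclusions. I would treat the two cases independently.

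\emph{Case 1: (a) replaced by (a') with (b) still in force.} The only thing to verify is (a). Since each Schur polynomial $s_\mu$ has nonnegative integer coefficients (as a sum of monomials $x^T$ over semistandard tableaux), a bracket whose Schur terms all carry coefficients of the same sign is, up to an overall sign, a nonnegatively weighted sum of Schurs. Hence no monomial can cancel, and the support of the bracket equals the union of the supports of its Schur components, which is exactly (a).

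\emph{Case 2: (b) replaced by (b') with (a) still in force.} Here I would produce the chain required by (b) explicitly. With $\bar\lambda \leq \hat\lambda$ as in (b'), the brackets of $F$ are indexed by sizes $k$ with $|\bar\lambda| \leq k \leq |\hat\lambda|$, the $k$-th bracket collecting $\{s_\mu : \bar\lambda \leq \mu \leq \hat\lambda,\ |\mu|=k\}$. I would define a candidate for the $\trianglerighteq$-maximum at size $k$ greedily: let $r(k)$ be the largest index $r$ such that $\sum_{j\leq r}\hat\lambda_j + \sum_{j>r}\bar\lambda_j \leq k$, and set $\mu^{(k)}_j = \hat\lambda_j$ for $j \leq r(k)$, $\mu^{(k)}_j = \bar\lambda_j$ for $j > r(k)+1$, with the remaining mass placed in row $r(k)+1$. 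I would then check in turn that (i) $\mu^{(k)}$ is a partition lying in $[\bar\lambda,\hat\lambda]$ of size $k$, with $\mu^{(|\bar\lambda|)} = \bar\lambda$ and $\mu^{(|\hat\lambda|)} = \hat\lambda$; (ii) a standard mass-shift exchange argument forces $\mu^{(k)} \trianglerighteq \nu$ strictly for every other $\nu$ in $[\bar\lambda,\hat\lambda]$ of size $k$, so $\mu^{(k)}$ is the unique $\trianglerighteq$-maximum of its bracket; (iii) $\mu^{(k+1)}$ is obtained from $\mu^{(k)}$ by adding exactly one box, which sits in row $r(k)+1$, the northmost admissible row since rows $1,\dots,r(k)$ are already saturated at $\hat\lambda$.

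The main obstacle is ensuring that step (iii) really produces a valid Young diagram, namely that $\mu^{(k)}_{r(k)+1} < \hat\lambda_{r(k)}$. The key point is the strict inequality $\mu^{(k)}_{r(k)+1} < \hat\lambda_{r(k)+1}$, which follows from the maximality of $r(k)$: if saturation held at row $r(k)+1$, the index could be increased by one, a contradiction. Combining this with $\hat\lambda_{r(k)+1} \leq \hat\lambda_{r(k)}$, which is just the partition property of $\hat\lambda$, gives the required strict inequality, and the Young-diagram property is preserved at each step. Once (i)--(iii) are in place, the sequence $\bar\lambda = \mu^{(|\bar\lambda|)} < \mu^{(|\bar\lambda|+1)} < \cdots < \mu^{(|\hat\lambda|)} = \hat\lambda$ is precisely the chain demanded by (b), and both cases conclude that $F$ is good.
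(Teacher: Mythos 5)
Your argument is correct and follows the same overall route as the paper's, namely to show that (a') forces (a) and (b') forces (b) so that Theorem \ref{GoodSNPIDP} applies; the difference is that the paper's proof is a one-line assertion that (a') and (b') are special cases, whereas you supply an actual verification. Case 1 is clean: since each $s_\mu$ has nonnegative integer coefficients, same-signed brackets admit no cancellation, giving (a). Case 2 is the genuinely substantive part: your greedy construction $\mu^{(k)}$ (fill rows up to $r(k)$ with $\hat\lambda$, rows past $r(k)+1$ with $\bar\lambda$, remainder in row $r(k)+1$) does produce the dominance-maximum of the bracket at each size, and your two-sided comparison (rows $\le r(k)$ bounded above by $\hat\lambda$, rows $> r(k)$ bounded below by $\bar\lambda$) is exactly the right way to see that it dominates every other $\nu$ in $[\bar\lambda,\hat\lambda]$ of the same size.

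One small gap in step (iii): you argue that the increment $\mu^{(k)} \to \mu^{(k+1)}$ is a single box in row $r(k)+1$, but you only treat the inequality ensuring Young-diagram validity. When $r(k+1) > r(k)$ you should also check that rows $r(k)+2,\dots,r(k+1)$ of $\mu^{(k+1)}$ coincide with those of $\mu^{(k)}$. This does hold: the maximality of $r(k)$ gives $\sum_{j\le r(k)+1}\hat\lambda_j + \sum_{j> r(k)+1}\bar\lambda_j \ge k+1$, while the definition of $r(k+1)$ gives the reverse inequality for every $r''\le r(k+1)$, forcing equality throughout and hence $\hat\lambda_j = \bar\lambda_j$ for $r(k)+2 \le j \le r(k+1)$, so those rows are unchanged and exactly one box is added in row $r(k)+1$. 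With this supplied, your chain is precisely the one required by condition (b), and the corollary follows. Your write-up is thus a more rigorous expansion of the paper's terse proof, not a departure from it.
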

\begin{proof}
The condition (a'), (b') are particular cases of condition (a), (b), respectively. Moreover, the partitions $\alpha, \beta$ in (b') are $\bar{\lambda}, \hat{\lambda}$, respectively. 
\end{proof}

\begin{example}\label{s310i}
Let $F(x_1,x_2,x_3)$ be 
$$
s_{(3,1,0)} - (3s_{(3,2,0)}+6s_{(3,1,1)}) + (3s_{3,3,0} + 18s_{(3,2,1)}) - (18 s_{(3,3,1)} + 4 s_{(3,2,2)}) + 44 s_{(3,3,2)} - 55 s_{(3,3,3)}. 
$$
Schur polynomials in the same bracket have the same sign. The $\trianglerighteq$-maximum partitions $\lambda^i$ for $i=0,\dots,5$ chosen from brackets have form 
$$\alpha=(3,1,0) < (3,2,0) < (3,3,0) < (3,3,1) < (3,3,2) < (3,3,3) = \beta.
$$
Hence, $F$ is a good symmetric polynomial. $Newton(F)$ is the convex hull of six different color polygons in the picture below. Each polygon is the Newton polytope of each bracket. In fact, $F$ is the inflated symmetric Grothendieck polynomial $G_{2,(3,1,0)}$ in \cite{bayer2020lattice}. Hence, $F$ has SNP and $Newton(F)$ has IDP by \cite[Proposition 21, Theorem 27]{bayer2020lattice}. 

\begin{center}
\begin{tikzpicture}[scale=1]
	\def\a{blue}
	\def\b{red}
	\def\c{pink}
	\def\d{violet}
	\def\e{orange}
	\def\opacity{100}
	\tikzstyle{point1}=[ball color=blue, circle, draw=black, inner sep=0.03cm]
	\tikzstyle{point2}=[ball color=red, circle, draw=black, inner sep=0.03cm]
	\tikzstyle{point3}=[ball color=yellow, circle, draw=black, inner sep=0.03cm]
	
	\node (O0) at (0,0,0)[]{};
	\node (O1) at (4,0,0)[]{};
	\node (O2) at (0,4,0)[]{};
	\node (O3) at (0,0,4)[]{};
	
	\node at (O0) [left = 1mm]{\tiny$O$}; 
	\node at (O1) [above = 1mm]{\tiny$e_1$}; 
	\node at (O2) [right = 1mm]{\tiny$e_2$};
	\node at (O3) [above = 1mm]{\tiny$e_3$};
	 
	\draw[-{stealth[scale=3.0]}] (O0) -- (O1);
	\draw[-{stealth[scale=3.0]}] (O0) -- (O2);
	\draw[-{stealth[scale=3.0]}] (O0) -- (O3);

	\filldraw[fill=\a!20,rounded corners=0.5pt] (3,1,0) -- (3,0,1) -- (1,0,3) -- (0,1,3) -- (0,3,1) -- (1,3,0) -- cycle;

	\node (A1) at (3,1,0)[point1]{};
	\node (A2) at (3,0,1)[point1]{};
	\node (A3) at (1,0,3)[point1]{};
	\node (A4) at (0,1,3)[point1]{};
	\node (A5) at (0,3,1)[point1]{};
	\node (A6) at (1,3,0)[point1]{};
	
	\node at (A1) [right = 1mm]{\tiny$(3,1,0)$}; 
	\node at (A2) [right = 1mm]{\tiny$(3,0,1)$};
	\node at (A3) [below = 1mm]{\tiny$(1,0,3)$};
	\node at (A4) [left = 1mm]{\tiny$(0,1,3)$};
	\node at (A5) [left = 1mm]{\tiny$(0,3,1)$};
	\node at (A6) [above = 1mm]{\tiny$(1,3,0)$};

	\filldraw[fill=\b!20,rounded corners=0.5pt] (3,2,0) -- (3,0,2) -- (2,0,3) -- (0,2,3) -- (0,3,2) -- (2,3,0) -- cycle;

	\node (A12) at (3,2,0)[point1]{};
	\node (A22) at (3,0,2)[point1]{};
	\node (A32) at (2,0,3)[point1]{};
	\node (A42) at (0,2,3)[point1]{};
	\node (A52) at (0,3,2)[point1]{};
	\node (A62) at (2,3,0)[point1]{};
	
	\node at (1,1,3)[point1]{};
	\node at (1,3,1)[point1]{};
	\node at (3,1,1)[point1]{};
	
	\node at (A32) [below = 1mm]{\tiny$(2,0,3)$};
	\node at (A42) [left = 1mm]{\tiny$(0,2,3)$};
	\node at (A52) [left = 1mm]{\tiny$(0,3,2)$};
	\node at (A62) [above = 1mm]{\tiny$(2,3,0)$};
	\node at (A22) [right = 1mm]{\tiny$(3,0,2)$};
	\node at (A12) [right = 1mm]{\tiny$(3,2,0)$};

	\filldraw[fill=\c!20,rounded corners=0.5pt] (3,3,0) -- (3,0,3) -- (0,3,3) -- cycle;

	\node (A13) at (3,3,0)[point1]{};
	\node (A23) at (3,0,3)[point1]{};
	\node (A33) at (0,3,3)[point1]{};
	
	\node at (1,2,3)[point1]{};
	\node at (2,1,3)[point1]{};
	\node at (1,3,2)[point1]{};
	\node at (2,3,1)[point1]{};
	\node at (3,1,2)[point1]{};
	\node at (3,2,1)[point1]{};
	
	\node at (A23) [below = 1mm]{\tiny$(3,0,3)$};
	\node at (A33) [left = 1mm]{\tiny$(0,3,3)$};
	\node at (A13) [above = 1mm]{\tiny$(3,3,0)$};

	\filldraw[fill=\d!20,rounded corners=0.5pt] (3,3,1) -- (3,1,3) -- (1,3,3) -- cycle;

	\node (A14) at (3,3,1)[point1]{};
	\node (A24) at (3,1,3)[point1]{};
	\node (A34) at (1,3,3)[point1]{};
	
	\node at (A24) [right = 1mm]{\tiny$(3,1,3)$};

	\filldraw[fill=\e!20,rounded corners=0.5pt] (3,3,2) -- (3,2,3) -- (2,3,3) -- cycle;

	\node (A15) at (3,3,2)[point1]{};
	\node (A25) at (3,2,3)[point1]{};
	\node (A35) at (2,3,3)[point1]{};
	
	\node at (2,2,3)[point1]{};
	\node at (2,3,2)[point1]{};
	\node at (3,2,2)[point1]{};
	
	\node at (A25) [right = 1mm]{\tiny$(3,2,3)$};

	\node (A16) at (3,3,3)[point1]{};
	\node at (A16) [right = 1mm]{\tiny$(3,3,3)$};
	
	\draw[rounded corners=0.5pt] (0,3,1) -- (0,3,3) -- (3,3,3) -- (3,3,0) -- (1,3,0) -- cycle;
	\draw[rounded corners=0.5pt] (0,3,3) -- (0,1,3);
	\draw[rounded corners=0.5pt] (1,0,3) -- (3,0,3) -- (3,3,3);
	\draw[rounded corners=0.5pt] (3,0,3) -- (3,0,1);
	\draw[rounded corners=0.5pt] (3,1,0) -- (3,3,0);
\end{tikzpicture}
\end{center}

\end{example}
The following examples tell us that when Theorem \ref{GoodSNPIDP} does not apply, we may not have a definite affirmation of SNP and IDP.  

\begin{example}
When the condition (a) fails, for instance: 
\begin{itemize}
    \item 
Let $F(x_1,x_2,x_3)$ be $s_{(3,1,0)}-s_{(2,2,0)}$. Then $F$ does not have $SNP$ because $(2,2,0) \not\in Supp(F)$, but $Newton(F)=Newton(s_{(3,1,0)})$ still has IDP.
\end{itemize}

When adding blocks to $\alpha$ in a wrong order in (b), for instance:
\begin{itemize}
    \item Let choose $\alpha=(3,1,0) < (3,1,1) < (3,2,1)=\beta$ and let $F(x_1,x_2,x_3)$ be $s_{(3,1,0)} + s_{(3,1,1)} + s_{(3,2,1)}$. Then $F$ has $SNP$.
    \item Let choose $\alpha = (6,4,0) < (6,4,1) < (6,4,2) < (6,4,3) < (6,5,3) < (6,6,3) =\beta$ and let $F(x_1,x_2,x_3)$ be $s_{(6,4,0)} + s_{(6,4,1)} + s_{(6,4,2)} + s_{(6,4,3)} + s_{(6,5,3)} + s_{(6,6,3)}$. Since $(6,5,2) \in Newton(F)\cap \mathbb{Z}^3 \setminus Supp(f)$, then $F$ does not has $SNP$.
\end{itemize}
We are not sure if there exists a symmetric polynomial that has SNP, but its Newton polytope does not have IDP.
\end{example}

We need the following facts to prove Theorem \ref{GoodSNPIDP}.

\begin{prop}(\cite[Proposition 2.5]{rado1952inequality})\label{Rado} Let $\alpha, \beta$ be partitions of the same size. Then, $Newton(s_{\alpha}) \subseteq Newton(s_{\beta})$ if and only if $\alpha \trianglelefteq \beta$.
\end{prop}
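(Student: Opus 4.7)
The plan is to reduce the proposition to the classical majorization theorem of Hardy--Littlewood--P\'olya (in the form of Rado) by identifying $Newton(s_\lambda)$ with the permutohedron $P_\lambda := Conv(\{w(\lambda) : w \in S_m\})$. The first step is to prove that $Newton(s_\lambda) = P_\lambda$. One inclusion is clear because the content of any semistandard Young tableau of shape $\lambda$ is majorized by $\lambda$, so it is a convex combination of permutations of $\lambda$; the reverse inclusion follows because every permutation $w(\lambda)$ is itself the content of some semistandard Young tableau of shape $\lambda$ (for instance, obtained by a jeu-de-taquin/Bender--Knuth style rearrangement, or directly from the Weyl-group symmetry of $s_\lambda$). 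In particular, the vertices of $Newton(s_\lambda)$ are exactly the permutations of $\lambda$.

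Next I would invoke the classical characterization: for two vectors $\alpha, \beta$ of the same size, $\alpha \in P_\beta$ if and only if $\alpha \trianglelefteq \beta$. The easy direction is to apply the linear functional $\phi_k(x) = x_1 + \cdots + x_k$: if $\alpha = \sum_w t_w \, w(\beta)$ with $t_w \geq 0$ and $\sum t_w = 1$, then
\[
\sum_{i=1}^k \alpha_i = \sum_w t_w \, \phi_k(w(\beta)) \leq \sum_w t_w (\beta_1 + \cdots + \beta_k) = \sum_{i=1}^k \beta_i,
\]
since $\beta$ is weakly decreasing forces $\phi_k(w(\beta)) \leq \beta_1 + \cdots + \beta_k$ for every $w \in S_m$. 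The other direction, that $\alpha \trianglelefteq \beta$ implies $\alpha \in P_\beta$, is Rado's theorem; it is proved by induction, writing $\alpha$ as a convex combination obtained from $\beta$ through a sequence of ``Robin Hood'' transfers that weakly decrease dominance.

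Combining these ingredients proves the proposition. For the forward implication, assume $\alpha \trianglelefteq \beta$. Then $\alpha \in P_\beta$ by the majorization theorem, and since $P_\beta$ is $S_m$-invariant, every permutation $w(\alpha)$ also lies in $P_\beta$. Taking the convex hull gives
\[
Newton(s_\alpha) = P_\alpha = Conv(\{w(\alpha) : w \in S_m\}) \subseteq P_\beta = Newton(s_\beta).
\]
Conversely, if $Newton(s_\alpha) \subseteq Newton(s_\beta)$, then in particular $\alpha \in P_\alpha \subseteq P_\beta$, and the majorization theorem yields $\alpha \trianglelefteq \beta$.

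The main technical point is the identification $Newton(s_\lambda) = P_\lambda$, which requires exhibiting semistandard tableaux realizing each permutation of $\lambda$ as a content vector; every other step is either a one-line computation with $\phi_k$ or a direct appeal to the Hardy--Littlewood--P\'olya/Rado majorization theorem. Since this proposition is cited from \cite{rado1952inequality}, the proof presented in the paper will likely be brief and mainly record the reduction to Rado's theorem.
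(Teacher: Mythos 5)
Your proof is correct: the identification $Newton(s_\lambda)=Conv(\{w(\lambda):w\in S_m\})$ (via the fact that SSYT contents have decreasing rearrangement dominated by $\lambda$, plus the $S_m$-symmetry of $s_\lambda$) together with the Hardy--Littlewood--P\'olya/Rado characterization of the permutohedron is exactly the standard route to this statement. The paper itself gives no proof at all --- the proposition is simply quoted from Rado --- so your argument just fills in what the citation leaves implicit; the only tiny point to phrase carefully is that for a non-sorted point $v$ of the permutohedron the dominance condition refers to its decreasing rearrangement, which you handle correctly where it matters since $\alpha$ and $\beta$ are partitions.
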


\begin{lemm}(\cite[Theorem 0.1]{escobar2017newton}) \label{Smorbit} Let $\alpha$ be a partition with at most $m$ parts. Then $s_\alpha$ has SNP with Newton polytope being the convex hull of the $S_m$-orbit of $\alpha$.
\end{lemm}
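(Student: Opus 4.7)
The plan is to prove both halves of Lemma \ref{Smorbit} at once by explicitly identifying the support of $s_\alpha$ with the set of lattice points of the permutohedron on $\alpha$. The two ingredients are the Kostka positivity criterion from the theory of semistandard Young tableaux and the Hardy--Littlewood--P\'olya (Rado) characterization of majorization by permutations.

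First I would read off the support. A point $\beta=(\beta_1,\dots,\beta_m)\in\mathbb{Z}_{\geq 0}^m$ lies in $Supp(s_\alpha)$ exactly when there is a semistandard Young tableau of shape $\alpha$ and content $\beta$. Since $s_\alpha$ is symmetric, membership in the support depends only on the weakly decreasing rearrangement $\mathrm{sort}(\beta)$; and for a partition $\gamma$ the Kostka number $K_{\alpha\gamma}$ is positive if and only if $|\gamma|=|\alpha|$ and $\gamma\trianglelefteq\alpha$. This yields the combinatorial description
\[
Supp(s_\alpha)=\{\beta\in\mathbb{Z}_{\geq 0}^m : |\beta|=|\alpha|\text{ and }\mathrm{sort}(\beta)\trianglelefteq\alpha\}.
\]

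Next I would invoke the Hardy--Littlewood--P\'olya theorem: for a partition $\alpha$ with at most $m$ parts, the convex hull of the $S_m$-orbit of $\alpha$ equals $\{v\in\mathbb{R}_{\geq 0}^m : |v|=|\alpha|\text{ and }\mathrm{sort}(v)\trianglelefteq\alpha\}$, where dominance is extended to real vectors via the usual partial-sum inequalities. Intersecting with $\mathbb{Z}^m$ recovers the set displayed above, so $Supp(s_\alpha)=Conv(S_m\cdot\alpha)\cap\mathbb{Z}^m$. Every permutation of $\alpha$ lies in $Supp(s_\alpha)$ (take $\gamma=\alpha$ and apply symmetry), hence $Conv(S_m\cdot\alpha)\subseteq Newton(s_\alpha)$; conversely the support sits inside $Conv(S_m\cdot\alpha)$, giving the reverse inclusion. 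Combining, $Newton(s_\alpha)=Conv(S_m\cdot\alpha)$, and the lattice-point equality above is precisely SNP.

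The main obstacle is the Hardy--Littlewood--P\'olya identification, i.e.\ that majorization of $v$ by $\alpha$ is equivalent to $v$ being a convex combination of permutations of $\alpha$. The nontrivial direction (majorization implies convex combination) is classically proved either by constructing a doubly stochastic matrix carrying $\alpha$ to $v$ and applying Birkhoff's theorem, or by an inductive exchange argument that moves $v$ toward $\alpha$ one transposition at a time. Everything else is routine bookkeeping with Kostka positivity and the symmetry of $s_\alpha$.
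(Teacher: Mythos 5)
Your proof is correct, and since the paper itself gives no argument for this lemma (it simply cites \cite[Theorem 0.1]{escobar2017newton}, which in turn rests on Rado's theorem), your route --- Kostka positivity $K_{\alpha\gamma}>0 \iff |\gamma|=|\alpha|$ and $\gamma\trianglelefteq\alpha$, combined with the Hardy--Littlewood--P\'olya/Rado description of $Conv(S_m\cdot\alpha)$ as the majorization polytope --- is exactly the standard argument underlying the cited result. No gaps: the support identification, the lattice-point comparison, and the two inclusions giving $Newton(s_\alpha)=Conv(S_m\cdot\alpha)$ are all sound.
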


\begin{proof}[Proof of Theorem \ref{GoodSNPIDP}]
\underline{We first prove that $F$ has SNP.} We use the trick from \cite{escobar2017newton}.
\begin{itemize}
\item[1.]
Let $F=\sum\limits_{\mu}C_\mu s_\mu$ with $C_\mu \ne 0$. By condition (a) of $F$, we have
\begin{equation}\label{SuppFid1}
    Supp(F) = \bigcup\limits_{\mu} Supp(s_\mu).
\end{equation}
Then 
\begin{equation}\label{NewFid2}
    Newton(F) = Conv(\bigcup\limits_{\mu} Supp(s_\mu)).
\end{equation}
Let $\alpha=\lambda^0 < \lambda^1 < \dots < \lambda^l = \beta$ be the $\trianglerighteq$-maximum partitions in condition (b) of $F$. By Proposition \ref{Rado}, the right-hand side of (\ref{SuppFid1}) is
\begin{equation}\label{suppmu}
    \bigcup\limits_{\mu} Supp(s_\mu)=\bigcup\limits_{i=0}^l Supp(s_{\lambda^i}).
\end{equation}
Therefore, by (\ref{SuppFid1}), (\ref{suppmu}), 
\begin{equation}\label{suppFandsi}
    Supp(F)=\bigcup\limits_{i=0}^l Supp(s_{\lambda^i}).
\end{equation}
By Proposition \ref{Rado}, 
\begin{equation*}
    Conv(Supp(s_\mu)) = Newton(s_\mu) \subseteq Newton(s_{\lambda^i}) = Conv(Supp(s_{\lambda^i}))
\end{equation*}
for some $i$. It implies that the right-hand side of (\ref{NewFid2}) is 
\begin{equation}\label{Consuppmu}
Conv(\bigcup\limits_{\mu} Supp(s_\mu))=Conv(\bigcup\limits_{i=0}^l Newton(s_{\lambda^i})).    
\end{equation}
Hence by (\ref{NewFid2}), (\ref{Consuppmu}), we have
\begin{equation}\label{NewFid3}
    Newton(F) = Conv(\bigcup\limits_{i=0}^l Newton(s_{\lambda^i})).
\end{equation}
\item[2.]
Let $p$ be a point in $Newton(F) \cap \mathbb{Z}^m$. By (\ref{NewFid3}), $p$ has form $p=\sum\limits_{i=0}^l c_i v^i$ for some $v^i \in Newton(s_{\lambda^i})$, and some $c_i \in \mathbb{R}_{\geq 0}$, $\sum\limits_{i=1}^l c_i = 1$. We see that $v^i$ is not a partition in general. However, if we denote the sum of its coordinates by $|v^i|$, then $|v^i|=|\lambda^i|$. Then $|p|=\sum\limits_{i=0}^l c_i|\lambda^i|$ is between $|\lambda^0|$ and $|\lambda^l|$, because of (\ref{chain}). Thus $|p|=|\lambda^j|$ for some $j \in [0,l]$, because $\lambda^{i}$ is obtained from $\lambda^{i-1}$ by adding a box. Let $\overline{p}$ be $\sum\limits_{i=0}^l c_i\lambda^i$ and $p^{\downarrow}$ be the rearrangement of the components of $p$ into decreasing order. It was proven in \cite{escobar2017newton} that $p^{\downarrow} \trianglelefteq (\overline{p})^{\downarrow}$ (Claim B) and $(\overline{p})^{\downarrow} \trianglelefteq \lambda^j$ (Claim C). So $p^{\downarrow} \trianglelefteq \lambda^j$. By Lemma \ref{Smorbit}, Proposition \ref{Rado}, $p$ is a point in
\begin{equation}
    Newton(s_{p^{\downarrow}}) \cap \mathbb{Z}^m \subseteq Newton(s_{\lambda^j}) \cap \mathbb{Z}^m = Supp(s_{\lambda^j}) \subseteq Supp(F).
\end{equation}
Therefore we conclude that $F$ has SNP.
\end{itemize}

\underline{Now we show that $Newton(F)$ has IDP.} We use the trick from \cite{bayer2020lattice}.
\begin{itemize}
    \item[1.] We have proven that $F$ has SNP. Then by (\ref{suppFandsi}), Lemma \ref{Smorbit}, we have
    \begin{equation}\label{NewFZid}
        Newton(F) \cap \mathbb{Z}^m = Supp(F) = \bigcup\limits_{i=0}^l Supp(s_{\lambda^i}) = \bigcup\limits_{i=0}^l Newton(s_{\lambda^i})\cap \mathbb{Z}^m.
    \end{equation}

    \item[2.] Suppose that $\alpha=(\alpha_1,\dots,\alpha_m)$ and $\beta=(\beta_1,\dots,\beta_m)$. For $i = 1, \dots, m-1$, set $\lambda^{(i)}=(\beta_1,\dots,\beta_i,\alpha_{i+1},\dots,\alpha_m)$. Set $\lambda^{(0)}=\alpha$, $\lambda^{(m)}=\beta$. Then $\alpha=\lambda^{(0)} < \dots < \lambda^{(m)}=\beta$ is a subchain of (\ref{chain}). We have 
\begin{equation}\label{NewFid4}
    Newton(F) = Conv(\bigcup\limits_{i=0}^m Newton(s_{\lambda^{(i)}})).
\end{equation}
Indeed, $Newton(F)$ is the convex hull of its vertex set. We can get (\ref{NewFid4}) from (\ref{NewFid3}) by showing that a partition $\lambda^j$ not of form $\lambda^{(i)}$ is not a vertex of $Newton(F)$. It is trivial because 
$\lambda^j=\frac{1}{2}(\lambda^{j-1}+\lambda^{j+1})$.

\item[3.]For a positive integer $t$, we construct a chain of form (\ref{chain}) 
\begin{equation}\label{chain2}
    t\alpha= \Lambda^0 < \dots < \Lambda^L=t\beta.
\end{equation}
Set $F_t=\sum\limits_{i=0}^Ls_{\Lambda^i}$. Then $F_t$ is a good linear combination of Schur polynomials and $\Lambda^{(i)}=t\lambda^{(i)}$ for each $i=0,\dots,m$. By (\ref{NewFid4}), we have
\begin{equation}\label{NewFid5}
\begin{split}
    Newton(F_t)&= Conv(\bigcup\limits_{i=0}^m Newton(s_{\Lambda^{(i)}}))\\
    &=tConv(\bigcup\limits_{i=0}^m Newton(s_{\lambda^{(i)}}))\\
    &=tNewton(F).
\end{split}
\end{equation}

\item[4.] Let $p$ a point in $tNewton(F)\cap \mathbb{Z}^m$. By (\ref{NewFid5}), $p$ is a point in $Newton(F_t)\cap \mathbb{Z}$. Since $F_t$ has SNP, by (\ref{NewFZid}), it is a point in $Newton(s_{\Lambda^i}) \cap \mathbb{Z}$ for some $\Lambda^{i}$ in (\ref{chain2}). Hence, $p$ is the content of some semistandard tableau $T$ of shape $\Lambda^i$ with filling from $\{1,\dots,m\}$. For $j=1,\dots,t$, let $T_j$ be the semistandard tableau obtained by taking $j'$-th column of $T$ for $j'\equiv j \mod t $. Let $\theta(j)$ be the shape of tableau $T_j$. Let $v_j$ be the content of tableau $T_j$. Then $p=v_1+\dots+v_t$. We also have $\alpha \leq \theta(j) \leq \beta$. So there is a unique partition $\lambda^k$ in chain (\ref{chain}) such that $\theta(j) \trianglelefteq \lambda^k$. Then by Proposition \ref{Rado}, $v_j$ is a point in 
\begin{equation*}
    Newton(s_{\theta(j)}) \cap \mathbb{Z}^m \subseteq Newton(s_{\lambda^k}) \cap \mathbb{Z}^m.
\end{equation*}
So by (\ref{NewFZid}), $v_j$ is a point of $Newton(F) \cap \mathbb{Z}^m$. Therefore we conclude that $Newton(F)$ has IDP.
\end{itemize}
\end{proof}

\begin{example} In Example \ref{s310i}, the subchain $\lambda^{(i)}$ for $i=0,\dots,3$ in the proof of Theorem \ref{GoodSNPIDP} is 
\begin{equation*}
    \alpha=(3,1,0) = (3,1,0) < (3,3,0) < (3,3,3) = \beta.
\end{equation*}
In this case, $\lambda^{(0)}=\lambda^{(1)}$. The vertex set of $Newton(F)$ is the union of $S_3$-orbits of partitions $(3,1,0), (3,3,0), (3,3,3)$.

\begin{center}
\begin{tikzpicture}[scale=1]
	\def\a{blue}
	\def\b{red}
	\def\c{pink}
	\def\d{violet}
	\def\e{orange}
	\def\opacity{100}
	\tikzstyle{point1}=[ball color=blue, circle, draw=black, inner sep=0.03cm]
	\tikzstyle{point2}=[ball color=red, circle, draw=black, inner sep=0.03cm]
	\tikzstyle{point3}=[ball color=yellow, circle, draw=black, inner sep=0.03cm]
	
	\node (O0) at (0,0,0)[]{};
	\node (O1) at (4,0,0)[]{};
	\node (O2) at (0,4,0)[]{};
	\node (O3) at (0,0,4)[]{};
	
	\node at (O0) [left = 1mm]{\tiny$O$}; 
	\node at (O1) [above = 1mm]{\tiny$e_1$}; 
	\node at (O2) [above = 1mm]{\tiny$e_2$};
	\node at (O3) [above = 1mm]{\tiny$e_3$};
	 
	\draw[-{stealth[scale=3.0]}] (O0) -- (O1);
	\draw[-{stealth[scale=3.0]}] (O0) -- (O2);
	\draw[-{stealth[scale=3.0]}] (O0) -- (O3);

	\filldraw[fill=\a!20,rounded corners=0.5pt] (3,1,0) -- (3,0,1) -- (1,0,3) -- (0,1,3) -- (0,3,1) -- (1,3,0) -- cycle;

	\node (A1) at (3,1,0)[point1]{};
	\node (A2) at (3,0,1)[point1]{};
	\node (A3) at (1,0,3)[point1]{};
	\node (A4) at (0,1,3)[point1]{};
	\node (A5) at (0,3,1)[point1]{};
	\node (A6) at (1,3,0)[point1]{};
	
	\node at (A1) [right = 1mm]{\tiny$(3,1,0)$}; 
	\node at (A2) [right = 1mm]{\tiny$(3,0,1)$};
	\node at (A3) [below = 1mm]{\tiny$(1,0,3)$};
	\node at (A4) [left = 1mm]{\tiny$(0,1,3)$};
	\node at (A5) [left = 1mm]{\tiny$(0,3,1)$};
	\node at (A6) [above = 1mm]{\tiny$(1,3,0)$};

	\filldraw[fill=\b!20,rounded corners=0.5pt] (3,2,0) -- (3,0,2) -- (2,0,3) -- (0,2,3) -- (0,3,2) -- (2,3,0) -- cycle;

	\node (A12) at (3,2,0)[point1]{};
	\node (A22) at (3,0,2)[point1]{};
	\node (A32) at (2,0,3)[point1]{};
	\node (A42) at (0,2,3)[point1]{};
	\node (A52) at (0,3,2)[point1]{};
	\node (A62) at (2,3,0)[point1]{};
	
	\node at (1,1,3)[point1]{};
	\node at (1,3,1)[point1]{};
	\node at (3,1,1)[point1]{};

	\filldraw[fill=\c!20,rounded corners=0.5pt] (3,3,0) -- (3,0,3) -- (0,3,3) -- cycle;

	\node (A13) at (3,3,0)[point1]{};
	\node (A23) at (3,0,3)[point1]{};
	\node (A33) at (0,3,3)[point1]{};
	
	\node at (1,2,3)[point1]{};
	\node at (2,1,3)[point1]{};
	\node at (1,3,2)[point1]{};
	\node at (2,3,1)[point1]{};
	\node at (3,1,2)[point1]{};
	\node at (3,2,1)[point1]{};
	
	\node at (A23) [below = 1mm]{\tiny$(3,0,3)$};
	\node at (A33) [left = 1mm]{\tiny$(0,3,3)$};
	\node at (A13) [above = 1mm]{\tiny$(3,3,0)$};

	\filldraw[fill=\d!20,rounded corners=0.5pt] (3,3,1) -- (3,1,3) -- (1,3,3) -- cycle;

	\node (A14) at (3,3,1)[point1]{};
	\node (A24) at (3,1,3)[point1]{};
	\node (A34) at (1,3,3)[point1]{};

	\filldraw[fill=\e!20,rounded corners=0.5pt] (3,3,2) -- (3,2,3) -- (2,3,3) -- cycle;

	\node (A15) at (3,3,2)[point1]{};
	\node (A25) at (3,2,3)[point1]{};
	\node (A35) at (2,3,3)[point1]{};
	
	\node at (2,2,3)[point1]{};
	\node at (2,3,2)[point1]{};
	\node at (3,2,2)[point1]{};

	\node (A16) at (3,3,3)[point1]{};
	\node at (A16) [right = 1mm]{\tiny$(3,3,3)$};
	
	\draw[rounded corners=0.5pt] (0,3,1) -- (0,3,3) -- (3,3,3) -- (3,3,0) -- (1,3,0) -- cycle;
	\draw[rounded corners=0.5pt] (0,3,3) -- (0,1,3);
	\draw[rounded corners=0.5pt] (1,0,3) -- (3,0,3) -- (3,3,3);
	\draw[rounded corners=0.5pt] (3,0,3) -- (3,0,1);
	\draw[rounded corners=0.5pt] (3,1,0) -- (3,3,0);
\end{tikzpicture}
\end{center}

\end{example}

\section{Applications}\label{applications}
Theorem \ref{GoodSNPIDP}, Corollary \ref{goodsign} cover the following cases. Known results are:
\begin{itemize}
    \item SNP and IDP of inflated symmetric Grothendieck polynomials $G_{h,\lambda}$ (see \cite[Theorem 0.1]{escobar2017newton}, \cite[Proposition 21, Theorem 27]{bayer2020lattice}). Indeed, by definition
    \begin{equation*}
        G_{h,\lambda} = \sum\limits_{\mu}(-1)^{|\mu/\lambda|}b_{h,\lambda\mu} s_\mu,
    \end{equation*}
    where $b_{h,\lambda\mu}$ is the number of fillings satisfying certain conditions. So, all Schur elements in the same bracket with $s_\mu$ have the same sign $(-1)^{|\mu/\lambda|}$, and then the condition (a) is valid. By \cite[Lemma 18 (c)]{bayer2020lattice}, $b_{h,\lambda\mu}$ is nonzero if and only if $\lambda \leq \mu \leq \lambda^{(N)}$. Hence, by Corollary \ref{goodsign}, the condition (b) is valid with $\alpha = \lambda$ and $\beta = \lambda^{(N)}$. 
    \item SNP and IDP of the following symmetric polynomials in \cite{monical2019newton}: Stembridge's symmetric polynomials associated with totally nonnegative matrices (Theorem 2.28), cycle index polynomials (Theorem 2.30), Reutenauer's symmetric polynomials (Theorem 2.32), Schur $P$-polynomials and Schur $Q$-polynomials (Proposition 3.5), Stanley's symmetric polynomials (Theorem 5.8). They are particular cases of \cite[Prositions 2.5 (III)]{monical2019newton}. The proposition considers homogenous symmetric polynomials of degree $d$
    \begin{equation*}
        f = \sum_{|\mu| = d} c_\mu s_\mu
    \end{equation*}
    with suppose that there exists $\lambda$ so that $c_{\lambda} \ne 0$, $c_\mu \ne 0$ only if $\mu \trianglelefteq \lambda$, and $c_\mu \geq 0$ for all $\mu$. So, condition (a) is valid. The condition (b) is valid with $\alpha =\beta = \lambda$. More precisely, the Schur expansion of those polynomials have nonnegative coefficients by \cite{stembridge1991immanants}, \cite[page 396]{stanley1999enumerative}, \cite[page 12]{monical2019newton}, \cite{stembridge1989shifted}, \cite[Theorems 3.2, 4.1]{stanley1984number}, respectively. The condition (b) is valid with $\alpha=\beta$ and they can be found in the proofs of corresponding theorems in \cite{monical2019newton}.   
    
    \item SNP and IDP of the following symmetric polynomials in \cite{matherne2022newton}: chromatic symmetric polynomials of co-bipartite graphs (Proposition 3.1), indifference graphs of Dyck paths (Proposition 4.1), incomparability graphs of (3+1)-free posets (Theorem 5.7). They are also particular cases of \cite[Proposition 2.5 (III)]{monical2019newton} above. More precisely, the Schur-expansion of those polynomials have nonnegative coefficients by \cite[Corollary 3.6]{stanley1995symmetric}, \cite{stanley1993immanants}, \cite{gasharov1996incomparability}, respectively. Hence, condition (a) is valid. The condition (b) is valid with $\alpha =\beta$ and they are $\lambda(G), \lambda^{gr}(d), \lambda^{gr}(P)$, respectively. 
\end{itemize}
Unknown results are:
\begin{itemize}
    \item SNP and IDP of dual Grothendieck polynomials $g_\lambda$ in \cite{lam2007combinatorial}. Indeed, \cite[Theorem 9.8]{lam2007combinatorial} states that 
    \begin{equation*}
        g_\lambda = \sum\limits_{\mu} f_{\lambda}^{\mu} s_\mu,
    \end{equation*}
    where $f_\lambda^\mu$ is the number of semistandard tableaux of the skew shape $\lambda/\mu$ with entries of the $i$-th row lie in $[1,i-1]$. So, all nonzero coefficients $f_{\lambda}^\mu$ have same sign, and then the condition (a) is valid. Moreover, $f_{\lambda}^{\mu}$ is nonzero if and only if $(\lambda_1) \leq \mu \leq \lambda$. Hence, by Corollary \ref{goodsign}, the condition (b) is valid with $\alpha = (\lambda_1)$ and $\beta = \lambda$.
\end{itemize}

\begin{rema}
Though Theorem \ref{GoodSNPIDP} covers \cite[Theorem 27]{bayer2020lattice}, inside the proofs we do not need to choose $F_t$ as a generalization of $G_{th,t\lambda}$. The key point is to choose a set-up for $F_t$ so that it has SNP and $Newton(F_t) = t Newton(F)$ for any $t$. For this purpose, there are many choices for $F_t$, for instance  $\sum\limits_{i=0}^L s_{\Lambda^i}$, or $\sum\limits_{i=0}^L (-1)^i s_{\Lambda^i}$, or $G_{th,t\lambda}$ when $F=G_{h,\lambda}$, etc. Our first choice $F_t=\sum\limits_{i=0}^L s_{\Lambda^i}$ is the simplest.
\end{rema}

\bibliography{references}{}
\bibliographystyle{alpha}
\noindent Department of Mathematics and Statistics, University at Albany, Albany, NY 12222, USA.\\
E-mail: \href{khanh.mathematic@gmail.com}{khanh.mathematic@gmail.com} \\

\noindent Faculty of Advanced Science and Technology, University of Science and Technology - The
University of Da Nang, 54 Nguyen Luong Bang, Da Nang, Vietnam.\\
E-mail: \href{ngocgiao185@gmail.com}{ngocgiao185@gmail.com} \\

\noindent Department of Mathematics, Dalat University, 1 Phu Dong Thien Vuong, Ward 8, Dalat City, Lam Dong, Vietnam.\\
E-mail: \href{hiepdt@dlu.edu.vn}{hiepdt@dlu.edu.vn} \\

\noindent Institute of Mathematics, Vietnam Academy of Science and Technology, 18 Hoang Quoc Viet, Cau Giay, Hanoi, Vietnam.\\
E-mail: \href{cbl.dolehaithuy@gmail.com}{cbl.dolehaithuy@gmail.com}

\end{document}